\documentclass[letterpaper,12pt]{article}

\usepackage[utf8]{inputenc}
\usepackage[dvipsnames]{xcolor}
\usepackage[utf8]{inputenc}
\usepackage{amssymb}
\usepackage{amsmath} 
\usepackage[colorinlistoftodos]{todonotes}
\usepackage{amsthm}
\usepackage[margin=1in]{geometry}
\setlength{\textheight}{235.0mm}
\usepackage{fancyhdr}
\usepackage{upgreek}
\usepackage{cite}
\usepackage{enumitem}
\numberwithin{equation}{section}

\usepackage[colorlinks]{hyperref}

\title{Fixed point results for contractions of polynomial type}

\author{Mohamed Jleli\footnote{Department of Mathematics, College of Science, King Saud University, Riyadh 11451, Saudi Arabia; E-mail address: jleli@ksu.edu.sa}, Cristina Maria P\u{a}curar\footnote{Faculty of Mathematics and Computer Science, Transilvania University of Bra\c{s}ov, 50 Iuliu Maniu Blvd., Bra\c{s}ov, Romania; E-mail address: cristina.pacurar@unitbv.ro},  Bessem Samet\footnote{Department of Mathematics, College of Science, King Saud University, Riyadh 11451, Saudi Arabia; E-mail address: bsamet@ksu.edu.sa}}

\date{}

\newtheorem{theorem}{Theorem}[section]

\theoremstyle{definition}
\newtheorem{definition}[theorem]{Definition}
\newtheorem{example}[theorem]{Example}
\newtheorem{proposition}[theorem]{Proposition}
\newtheorem{corollary}[theorem]{Corollary}

\theoremstyle{remark}
\newtheorem{remark}[theorem]{Remark}

\begin{document}

\setlength{\headheight}{13.59999pt}
\addtolength{\topmargin}{-1.59999pt}
\maketitle

\begin{abstract}
We introduce two new classes of single-valued  contractions of polynomial type defined on a metric space. For the first one, called the class of polynomial contractions, we establish two fixed point theorems. Namely, we first consider the case when the  mapping is continuous. Next, we weaken the continuity condition. In particular, we recover Banach's fixed point theorem. The second class, called the class of almost polynomial contractions, includes the class of almost contractions introduced by Berinde [Nonlinear Analysis Forum. 9(1) (2004) 43--53]. A fixed point theorem is established for almost polynomial contractions. The obtained result generalizes that derived by Berinde in the above reference. Several examples showing that our generalizations are significant, are provided.  

\vspace{0.1cm}

\noindent {\bf 2020 Mathematics Subject Classification:} 47H10, 54H25.\\
\noindent {\bf Key words and phrases:} Polynomial contractions, almost polynomial contractions, Picard-continuous mappings, weakly Picard operators,  fixed point.  
\end{abstract}

\section{Introduction}\label{sec1}

The most used techniques for studying the existence an uniqueness of solutions to nonlinear problems (for instance, integral equations, differential equations, fractional differential equations, evolution equations)  are based on the reduction of the problem to an equation of the form  $Tu=u$, where $T$ is a self-mapping defined on a certain set $X$ (usually equipped with a certain topology) and $u\in X$ is the unknown solution. Any solution $u$ to the previous equation is called a fixed point of $T$. So, the study of fixed point problems for different classes of mappings $T$ and different topological structures on $X$ is of great importance.

One of the most celebrated results in fixed point theory is the Banach fixed point theorem \cite{Banach}, which states that, if   $(X,d)$ is a complete metric space and $T: X\to X$ is a mapping  satisfying
\begin{equation}\label{Banach-CT}
d(Tw,Tz)\leq \lambda d(z,w)	
\end{equation}
for all $w,z\in X$, where $\lambda\in (0,1)$ is a constant, then 
\begin{itemize}
\item[($B_1$)] $T$ 	possesses one and only one fixed point;
\item[($B_2$)]For all $z_0\in X$, the sequence $\{z_n\}$ defined by $z_{n+1}=Tz_n$, converges to this fixed point. 
\end{itemize}
Any mapping $T: X\to X$ satisfying \eqref{Banach-CT} is called a contraction. The literature includes several generalizations and extensions of Banach's fixed point theorem.  Some of them were focused on weakening the right-side of inequality \eqref{Banach-CT}, see e.g. \cite{Berinde1,BO,Ciric,KH,RA,RE,Rus,Zhang}.  In other results, the underlying space is equipped with a generalized distance, see e.g. \cite{Branciari,CZ,DH,JS,Mustafa}. We also cite  the paper \cite{Nadler} of Nadler, who initiated the study of fixed points for 
multi-valued mappings. More recent fixed point results extending Banach's fixed point theorem can be found in \cite{AG,Berinde-20,Berinde4,PA2,Petrov,PE,Popescu,OZ}. 

On the other hand, despite the importance of Banach's fixed point theorem, this result is only concerned with continuous mappings. Namely, any mapping $T: X\to X$ satisfying \eqref{Banach-CT} is continuous on $(X,d)$. So, it is natural to ask whether it is possible to extend Banach's fixed point theorem to  mappings that are not necessarily continuous. The first work in this direction is due to  Kannan \cite{KAN}, where he introduced the class of mappings $T: X\to X$ satisfying the condition
$$
d(Tx,Ty)\leq \lambda \left[d(x,Tx)+d(y,Ty)\right]
$$
for all $x,y\in X$, where $\lambda\in \left(0,\frac{1}{2}\right)$ is a constant. Namely, it was shown that ($B_1$) and ($B_2$) hold also for the above class of mappings (when $(X,d)$ is a complete metric space). Following Kannan's result, several fixed point theorems have been  obtained without the requirement of the continuity of the mapping, see e.g. \cite{Berinde1,Berinde-08,Berinde3,CH,Ciric,RE}. In particular, Berinde \cite{Berinde1} introduced an interesting class of mappings, called the class of almost contractions (or weak contractions), which includes Kannan's mappings and many other classes of mappings. We recall below the definition of almost contractions. 

\begin{definition}\label{def1.1}
Let $(X,d)$ be a metric space. A mapping $T: X\to X$ is called an almost contraction, if there exists $\lambda\in (0,1)$ and $\ell>0$ such that 
\begin{equation}\label{contraction-de-Berinde}
d(Tx,Ty)\leq \lambda d(x,y)+\ell d(y,Tx)
\end{equation}
for every $x,y\in X$. 
\end{definition}
 
Berinde \cite{Berinde1} proved the following fixed point theorem for the above class of mappings. 

\begin{theorem}\label{T1.2}
Let $(X,d)$ be a complete metric space and $T: X\to X$ be an almost contraction. Then
\begin{itemize}
\item[{\rm{(i)}}] $T$ admits at least one fixed point;
\item[{\rm{(ii)}}] For all $z_0\in X$, the sequence $\{z_n\}$ defined by $z_{n+1}=Tz_n$, converges to a fixed point of $T$.  	
\end{itemize}
\end{theorem}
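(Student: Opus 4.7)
The plan is to follow the Banach-style Picard iteration strategy, but to exploit the asymmetry of condition \eqref{contraction-de-Berinde} carefully. Fix $z_0\in X$ arbitrarily and define the Picard orbit $z_{n+1}=Tz_n$. The two tasks are to establish that $\{z_n\}$ is Cauchy (so that completeness delivers a limit $z^*$) and to verify that $z^*$ is a fixed point of $T$.

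First I would show that the consecutive step-sizes $d(z_n,z_{n+1})$ decay geometrically. Applying \eqref{contraction-de-Berinde} with $x=z_{n-1}$ and $y=z_n$, the perturbation term becomes $\ell\, d(z_n,Tz_{n-1})=\ell\, d(z_n,z_n)=0$, so the inequality collapses to the pure contraction estimate $d(z_n,z_{n+1})\le \lambda\, d(z_{n-1},z_n)$; iterating gives $d(z_n,z_{n+1})\le \lambda^n d(z_0,z_1)$. A standard triangle-inequality and geometric-series bound then yields $d(z_n,z_m)\le \frac{\lambda^n}{1-\lambda}\,d(z_0,z_1)$ for $m>n$, which tends to $0$; hence $\{z_n\}$ is Cauchy and, by completeness, converges to some $z^*\in X$.

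The second step, showing $Tz^*=z^*$, is where one must be careful with the asymmetry of \eqref{contraction-de-Berinde}. Note that the naive substitution $x=z^*$, $y=z_n$ leads in the limit only to $d(z^*,Tz^*)\le \ell\, d(z^*,Tz^*)$, which is inconclusive since $\ell$ is not required to lie below $1$. The effective choice is $x=z_n$, $y=z^*$, producing
\begin{equation*}
d(z_{n+1},Tz^*)\le \lambda\, d(z_n,z^*)+\ell\, d(z^*,z_{n+1}).
\end{equation*}
Both terms on the right vanish as $n\to\infty$, while the left-hand side tends to $d(z^*,Tz^*)$; therefore $d(z^*,Tz^*)=0$, i.e.\ $Tz^*=z^*$. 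Assertions (i) and (ii) then follow simultaneously.

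I expect the main obstacle to be precisely this asymmetry of \eqref{contraction-de-Berinde}: unlike Banach's condition, it is not symmetric in $(x,y)$, so one has to choose the arguments in such a way that the perturbation $\ell\, d(y,Tx)$ is controllable — either vanishing identically (as in the Cauchy step, where the argument exploits the fact that $Tx=y$) or reducing to a distance that vanishes along the iterates (as in the identification of $z^*$ as a fixed point). I would also remark that uniqueness is deliberately absent from the conclusion, and in fact fails for this class in general, so one should not attempt to derive it.
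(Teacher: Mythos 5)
Your proof is correct, and each estimate in it is sound: the substitution $x=z_{n-1}$, $y=z_n$ does kill the perturbation term because $d(z_n,Tz_{n-1})=d(z_n,z_n)=0$, giving the pure geometric decay, and the substitution $x=z_n$, $y=z^*$ does yield $d(z_{n+1},Tz^*)\le \lambda\, d(z_n,z^*)+\ell\, d(z^*,z_{n+1})\to 0$, identifying the limit as a fixed point. Your warning about the bad substitution $x=z^*$, $y=z_n$ (which only gives $d(z^*,Tz^*)\le \ell\, d(z^*,Tz^*)$, useless since $\ell$ may exceed $1$) and your remark that uniqueness genuinely fails for this class are both accurate and well placed. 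The paper itself never proves Theorem \ref{T1.2} directly: it quotes it from Berinde and then recovers it as the special case $k=1$, $a_1=1$, $L_1=\ell/\lambda$ of Corollary \ref{CR3.4}, which rests on Theorem \ref{T3.3} together with Proposition \ref{PP}. But once you unwind that machinery, the content coincides with yours: the proof of Theorem \ref{T3.3} uses exactly the observation that the terms $L_i d^i(z_{n+1},Tz_n)$ vanish along the Picard orbit to get the Cauchy estimate, and Proposition \ref{PP} (Picard-continuity) is proved by precisely your final substitution with $(x,y)=(T^qz,u)$. So your argument is the direct, specialized form of the paper's route; what the paper's packaging buys is the abstraction of the last step into the notion of Picard-continuity, which lets the same skeleton serve the more general almost polynomial contractions, while your version is more self-contained and elementary for the statement at hand.
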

We point out that almost contractions can have more than one fixed point (see \cite[Example 1]{Berinde1}).

In this paper, we first introduce the class of polynomial contractions.  Two fixed point results are obtained for such mappings. Namely, we first consider the case when $T$ is continuous. Next, we weaken the continuity condition. Our obtained results recover Banach's fixed point theorem. Next, we introduce the class of almost polynomial contractions and establish a fixed point theorem for this class of mappings. Our obtained result generalizes Theorem \ref{T1.2}.
Several examples are provided to illustrate our results.  

\section{The class of polynomial contractions}\label{sec2}

We introduce below the class of  polynomial contractions.

\begin{definition}\label{def2.1}
Let $(X,d)$ be a metric space and $T: X\to X$	 be a given mapping. We say that $T$ is a polynomial contraction, if there exists $\lambda\in (0,1)$, a natural number $k\geq 1$ and a family of mappings  $a_i: X\times X\to [0,\infty)$, $i=0,\cdots,k$, such that 
\begin{equation}\label{SPC}
\sum_{i=0}^ka_i(Tx,Ty)d^i(Tx,Ty)\leq \lambda \sum_{i=0}^k a_i(x,y)d^i(x,y)	
\end{equation}
for every $x,y\in X$.  
\end{definition}

In this section, we are concerned with the study of fixed points for the above class of mappings.

We first consider the case when $T$ is a continuous mapping. 

\begin{theorem}\label{T2.2}
Let $(X,d)$ be a complete metric space and $T: X\to X$ be a polynomial contraction. Assume that the following conditions hold:
\begin{itemize}
\item[{\rm{(i)}}]$T$ is continuous;
\item[{\rm{(ii)}}] There exist $j\in\{1,\cdots,k\}$ and 	$A_j>0$ such that 
$$
a_j(x,y)\geq A_j,\quad x,y\in X.
$$
\end{itemize}
Then $T$ admits a unique fixed point $z^*\in X$. Moreover, for every $z_0\in X$, the Picard sequence  $\{z_n\}\subset X$ defined by $z_{n+1}=Tz_n$ for all $n\geq 0$, converges to $z^*$. 
\end{theorem}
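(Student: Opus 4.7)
The plan is to mimic the classical Banach argument, but working with the nonlinear functional
$\Phi(x,y) := \sum_{i=0}^{k} a_i(x,y)\, d^i(x,y)$ instead of $d$ itself. The key observation is that condition \eqref{SPC} says precisely $\Phi(Tx,Ty) \le \lambda\,\Phi(x,y)$, so $\Phi$ plays the role of a Lyapunov functional along the Picard orbit, while hypothesis (ii) lets us translate estimates on $\Phi$ back into estimates on the metric $d$.

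First I would fix $z_0 \in X$, set $z_{n+1} = Tz_n$, and consider the nonnegative real sequence $\phi_n := \Phi(z_n, z_{n+1})$. Applying \eqref{SPC} with $x=z_n$, $y=z_{n+1}$ yields $\phi_{n+1} \le \lambda\,\phi_n$, whence iteratively $\phi_n \le \lambda^n \phi_0$. Using hypothesis (ii), the single term $a_j(z_n,z_{n+1}) d^j(z_n,z_{n+1}) \ge A_j\, d^j(z_n,z_{n+1})$ is dominated by $\phi_n$, and extracting a $j$-th root gives
\begin{equation*}
d(z_n, z_{n+1}) \le \left(\frac{\phi_0}{A_j}\right)^{1/j} \bigl(\lambda^{1/j}\bigr)^{n}.
\end{equation*}
Since $\lambda^{1/j} \in (0,1)$, the standard geometric telescoping argument shows $\{z_n\}$ is Cauchy, and completeness yields a limit $z^{*} \in X$. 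Continuity of $T$ (hypothesis (i)) then forces $Tz^{*} = \lim Tz_n = \lim z_{n+1} = z^{*}$.

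For uniqueness, suppose $z^{*}$ and $w^{*}$ are both fixed points. Applying \eqref{SPC} with $x=z^{*}$, $y=w^{*}$ gives $\Phi(z^{*},w^{*}) = \Phi(Tz^{*},Tw^{*}) \le \lambda\,\Phi(z^{*},w^{*})$; as $\lambda < 1$ and $\Phi \ge 0$, we deduce $\Phi(z^{*},w^{*}) = 0$. Then hypothesis (ii) gives $A_j\, d^j(z^{*},w^{*}) \le \Phi(z^{*},w^{*}) = 0$, so $d(z^{*},w^{*}) = 0$ and the fixed point is unique.

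There is no serious obstacle here: the only subtle point is that the contraction hypothesis \eqref{SPC} bounds only the aggregate $\Phi$, not $d$ itself, so without hypothesis (ii) one could not recover a geometric decay for $d(z_n,z_{n+1})$ (the $i=0$ terms $a_0$ contribute additive noise unrelated to $d$, and higher-order terms could vanish). The lower bound $a_j \ge A_j > 0$ for some $j \ge 1$ is exactly what is needed to isolate a single positive power of $d$ and close the argument; the same bound is what makes the uniqueness step work.
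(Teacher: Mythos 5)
Your proof is correct, and its skeleton---iterating the aggregate functional $\Phi(x,y)=\sum_{i=0}^k a_i(x,y)d^i(x,y)$ along the Picard orbit, using (ii) to extract decay of $d^j(z_n,z_{n+1})$, invoking completeness plus continuity for existence, and feeding a pair of fixed points back into \eqref{SPC} for uniqueness---coincides with the paper's. There is, however, one step where you genuinely diverge, and your variant is the better one. To get Cauchyness you take the $j$-th root \emph{first}, obtaining $d(z_n,z_{n+1})\leq \sigma_{j,0}^{1/j}\,(\lambda^{1/j})^{n}$, and then telescope $d$ itself via the triangle inequality and a geometric series with ratio $\lambda^{1/j}\in(0,1)$; this is unimpeachable. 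The paper instead telescopes the power: it asserts, ``by the triangle inequality,'' that $d^j(z_n,z_{n+m})\leq d^j(z_n,z_{n+1})+\cdots+d^j(z_{n+m-1},z_{n+m})$. For $j\geq 2$ that inequality is false in general, since for nonnegative reals $(a+b)^j\geq a^j+b^j$, i.e.\ raising to the power $j$ turns the triangle inequality the wrong way (in $\mathbb{R}$ with $j=2$ and the points $0,1,2$ one would need $4\leq 1+1$). So the paper's Cauchy step as written is only valid for $j=1$, and your root-first ordering is exactly the repair, at no extra cost and with the same final estimate. Your uniqueness argument is also a slight streamlining---you conclude $\Phi(z^*,w^*)=0$ directly from $(1-\lambda)\Phi(z^*,w^*)\leq 0$ and then use (ii), rather than assuming $d(z^*,z^{**})>0$ and dividing by $\Phi$ to force the contradiction $1\leq\lambda$ as the paper does---but that difference is cosmetic; in substance the two uniqueness proofs are identical.
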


\begin{proof}
We first prove that the set of fixed points of $T$ is nonempty.  Let $z_0\in X$ be fixed and  $\{z_n\}\subset X$ be the Picard sequence defined by  
$$
z_{n+1}=Tz_n,\quad  n\geq 0.
$$
Making use of \eqref{SPC} with $(x,y)=(z_0,z_1)$, we obtain
$$
\sum_{i=0}^ka_i(Tz_0,Tz_1)d^i(Tz_0,Tz_1)\leq \lambda \sum_{i=0}^k a_i(z_0,z_1)d^i(z_0,z_1),
$$
that is,
\begin{equation}\label{S1}
\sum_{i=0}^ka_i(z_1,z_2)d^i(z_1,z_2)\leq \lambda \sum_{i=0}^k a_i(z_0,z_1)d^i(z_0,z_1).
\end{equation}
Using again \eqref{SPC} with $(x,y)=(z_1,z_2)$, we obtain
$$
\sum_{i=0}^ka_i(Tz_1,Tz_2)d^i(Tz_1,Tz_2)\leq \lambda \sum_{i=0}^k a_i(z_1,z_2)d^i(z_1,z_2),
$$
that is,
$$
\sum_{i=0}^ka_i(z_2,z_3)d^i(z_2,z_3)\leq \lambda \sum_{i=0}^k a_i(z_1,z_2)d^i(z_1,z_2),
$$
which implies by \eqref{S1} that 
$$
\sum_{i=0}^ka_i(z_2,z_3)d^i(z_2,z_3)\leq \lambda^2 \sum_{i=0}^k a_i(z_0,z_1)d^i(z_0,z_1). 
$$
Continuing in the same way, we obtain by induction that   
\begin{equation}\label{S2}
\sum_{i=0}^ka_i(z_n,z_{n+1})d^i(z_n,z_{n+1})\leq \lambda^n \sum_{i=0}^k a_i(z_0,z_1)d^i(z_0,z_1),\quad n\geq 0.  	
\end{equation}
Since 
$$
a_j(z_n,z_{n+1})d^j(z_n,z_{n+1})\leq \sum_{i=0}^ka_i(z_n,z_{n+1})d^i(z_n,z_{n+1}),
$$
we obtain by (ii) that 
$$
A_j d^j(z_n,z_{n+1})\leq \sum_{i=0}^ka_i(z_n,z_{n+1})d^i(z_n,z_{n+1}),
$$
which implies by \eqref{S2} that 
\begin{equation}\label{S3}
d^j(z_n,z_{n+1})\leq  \lambda^n \sigma_{j,0},\quad n\geq 0,	
\end{equation}
where 
\begin{equation}\label{sigma0}
\sigma_{j,0}=A_j^{-1}\sum_{i=0}^k a_i(z_0,z_1)d^i(z_0,z_1).
\end{equation}
Then, making use of \eqref{S3} and the triangle inequality, we obtain that for all $n\geq 0$ and $m\geq 1$,
$$
\begin{aligned}
d^j(z_n,z_{n+m})&\leq d^j(z_n,z_{n+1})+d^j(z_{n+1},z_{n+2})+\cdots+d^j(z_{n+m-1},z_{n+m})\\
&\leq \sigma_{j,0}\left(	\lambda^n+\lambda^{n+1}+\cdots+\lambda^{n+m-1}\right)\\
&=\sigma_{j,0} \lambda^n \frac{1-\lambda^{m}}{1-\lambda}\\
&\leq \sigma_{j,0}  \frac{\lambda^n}{1-\lambda},
\end{aligned}
$$
which yields
$$
d(z_n,z_{n+m})\leq \left(\frac{\sigma_{j,0} }{1-\lambda}\right)^{\frac{1}{j}} (\lambda^{\frac{1}{j}})^n\to 0\mbox{ as }n,m\to \infty.
$$
This shows that $\{z_n\}$ is a Cauchy sequence. Since $(X,d)$ is complete, there exists $z^*\in X$ such that 
$$
\lim_{n\to \infty}d(z_n,z^*)=0,
$$
which implies by the continuity of $T$ that 
$$
\lim_{n\to \infty}d(z_{n+1},Tz^*)=\lim_{n\to \infty}d(Tz_n,Tz^*)=0.
$$
Then, from the uniqueness of the limit, we deduce that $Tz^*=z^*$, that is, $z^*$ is a fixed point of $T$. 

We now show that $z^*$ is the unique fixed point of $T$. Indeed, if $z^{**}\in X$ is another fixed point of $T$, i.e., $Tz^{**}=z^{**}$ and $d(z^*,z^{**})>0$, then making use of \eqref{SPC} with $(x,y)=(z^*,z^{**})$, we get 
$$
\sum_{i=0}^ka_i(Tz^*,Tz^{**})d^i(Tz^*,Tz^{**})\leq \lambda \sum_{i=0}^k a_i(z^*,z^{**})d^i(z^*,z^{**}),	
$$ 
that is,
\begin{equation}\label{S4}
\sum_{i=0}^k a_i(z^*,z^{**})d^i(z^*,z^{**})\leq \lambda \sum_{i=0}^k a_i(z^*,z^{**})d^i(z^*,z^{**}).
\end{equation}
On the other hand, from	(ii), we have 
$$
\begin{aligned}
\sum_{i=0}^k a_i(z^*,z^{**})d^i(z^*,z^{**}) &\geq  a_j(z^*,z^{**})d^j(z^*,z^{**})\\
&\geq A_j d^j(z^*,z^{**}).
\end{aligned}
$$
Since $A_j>0$ and $d(z^*,z^{**})>0$, we deduce that 
$$
\sum_{i=0}^k a_i(z^*,z^{**})d^i(z^*,z^{**})>0. 
$$
Then, dividing  \eqref{S4} by $\sum_{i=0}^k a_i(z^*,z^{**})d^i(z^*,z^{**})$, we reach a contradiction with $\lambda\in (0,1)$. Consequently, $z^*$ is the unique fixed point of $T$. This completes the proof of Theorem \ref{T2.2}. 
\end{proof}

We now study some particular cases of Theorem \ref{T2.2}. 

\begin{proposition}\label{PR2.3}
Let $(X,d)$ be a metric space and $T: X\to X$ be a	polynomial contraction (in the sense of Definition \ref{def2.1}). Assume that the following conditions hold:
\begin{itemize}
\item[(i)] $a_0\equiv 0$, i.e., $a_0(x,y)=0$ for all $x,y\in X$;
\item[(ii)]For all $i\in\{1,\cdots,k\}$, there exists 	$B_i>0$ such that 
$$
a_i(x,y)\leq B_i,\quad x,y\in X;
$$
\item[(iii)] There exist $j\in\{1,\cdots,k\}$ and 	$A_j>0$ such that 
$$
a_j(x,y)\geq A_j,\quad x,y\in X.
$$
\end{itemize}
Then $T$ is continuous. 
\end{proposition}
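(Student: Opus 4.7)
The plan is to use a sequential characterization of continuity: fix an arbitrary $x \in X$ and an arbitrary sequence $\{x_n\} \subset X$ converging to $x$, and show that $Tx_n \to Tx$ as $n \to \infty$. The tool will be the defining inequality \eqref{SPC} applied with $(y_0, y_1) = (x, x_n)$, combined with hypotheses (i), (ii), (iii) to pinch the quantity $d(Tx, Tx_n)$ between zero and something tending to zero.

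Concretely, the steps would be as follows. First, I apply \eqref{SPC} with the pair $(x, x_n)$, which gives
$$
\sum_{i=0}^k a_i(Tx,Tx_n)\, d^i(Tx,Tx_n) \leq \lambda \sum_{i=0}^k a_i(x,x_n)\, d^i(x,x_n).
$$
Using (i), both sums start effectively at $i=1$. For the left-hand side, I retain only the single term indexed by the $j$ from (iii) and apply the lower bound $a_j(Tx,Tx_n) \geq A_j$, which yields
$$
A_j\, d^j(Tx,Tx_n) \leq \lambda \sum_{i=1}^k a_i(x,x_n)\, d^i(x,x_n).
$$
For the right-hand side, I use the uniform upper bounds from (ii), which gives
$$
A_j\, d^j(Tx,Tx_n) \leq \lambda \sum_{i=1}^k B_i\, d^i(x,x_n).
$$

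The conclusion is now immediate: since $d(x,x_n) \to 0$, we have $d^i(x,x_n) \to 0$ for each $i \in \{1,\ldots,k\}$, so the right-hand side tends to $0$; since $A_j > 0$ is a fixed positive constant, $d^j(Tx,Tx_n) \to 0$, and because $j \geq 1$ this is equivalent to $d(Tx,Tx_n) \to 0$. Hence $T$ is sequentially continuous at $x$, and since $x \in X$ was arbitrary, $T$ is continuous on $X$.

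I do not expect any real obstacle here; hypothesis (i) is what allows us to avoid a constant term that would not vanish as $x_n \to x$, hypothesis (iii) supplies the single coercive coefficient on the left used to isolate $d^j(Tx,Tx_n)$, and hypothesis (ii) makes the right-hand side dominated by a finite polynomial expression in $d(x,x_n)$. The only minor point worth stating carefully is that extracting the $j$-th root at the end is legitimate because $j \geq 1$, so $d^j(Tx,Tx_n) \to 0$ implies $d(Tx,Tx_n) \to 0$.
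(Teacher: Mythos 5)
Your proposal is correct and follows essentially the same argument as the paper: apply \eqref{SPC} along a convergent sequence, use (i) to drop the constant term, (iii) to isolate $A_j d^j(Tx,Tx_n)$ on the left, (ii) to bound the right-hand side by $\lambda\sum_{i=1}^k B_i d^i(x,x_n)$, and pass to the limit. The only cosmetic difference is the order of the pair fed into \eqref{SPC} (you use $(x,x_n)$, the paper uses $(u_n,u)$), which is immaterial since the contraction inequality holds for every ordered pair.
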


\begin{proof}
Let $\{u_n\}\subset X$ be a sequence such that 
\begin{equation}\label{SP1}
\lim_{n\to \infty}d(u_n,u)=0,	
\end{equation}
for some $u\in X$. 	Using (i) and making use of \eqref{SPC} with $(x,y)=(u_n,u)$, we get 
$$
\sum_{i=1}^ka_i(Tu_n,Tu)d^i(Tu_n,Tu)\leq \lambda \sum_{i=1}^k a_i(u_n,u)d^i(u_n,u),\quad n\geq 0,
$$
which implies by (ii) and (iii) that 
\begin{equation}\label{SP2}
A_jd^j(Tu_n,Tu)\leq \lambda \sum_{i=1}^k B_id^i(u_n,u),\quad n\geq 0.
\end{equation}
Then, making use of \eqref{SP1} and passing to the limit as $n\to \infty$ in \eqref{SP2}, we get
$$
\lim_{n\to \infty}d^j(Tu_n,Tu)=0,
$$ 
which is equivalent to 
$$
\lim_{n\to \infty}d(Tu_n,Tu)=0.
$$  
This shows that $T$ is a continuous mapping. 
\end{proof}

From Theorem \ref{T2.2} and Proposition \ref{PR2.3}, we deduce the following result. 

\begin{corollary}\label{CR2.4}
Let $(X,d)$ be a complete metric space and $T: X\to X$ be a polynomial contraction. Assume that the following conditions hold:
\begin{itemize}
\item[(i)] $a_0\equiv 0$;
\item[{\rm{(ii)}}] For all $i\in\{1,\cdots,k\}$, there exists 	$B_i>0$ such that 
$$
a_i(x,y)\leq B_i,\quad x,y\in X;
$$
\item[{\rm{(iii)}}] There exist $j\in\{1,\cdots,k\}$ and 	$A_j>0$ such that 
$$
a_j(x,y)\geq A_j,\quad x,y\in X.
$$
\end{itemize}
Then $T$ admits a unique fixed point $z^*\in X$. Moreover, for every $z_0\in X$, the Picard sequence  $\{z_n\}\subset X$ defined by $z_{n+1}=Tz_n$ for all $n\geq 0$, converges to $z^*$. 
\end{corollary}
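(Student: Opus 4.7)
The plan is to obtain Corollary \ref{CR2.4} as an immediate combination of Proposition \ref{PR2.3} and Theorem \ref{T2.2}, with essentially no new work needed. I would first observe that hypotheses (i), (ii), (iii) of Corollary \ref{CR2.4} are literally the hypotheses of Proposition \ref{PR2.3}, so Proposition \ref{PR2.3} applies verbatim and yields that $T$ is continuous on $(X,d)$.

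Having secured continuity of $T$, I would then verify that the remaining hypotheses of Theorem \ref{T2.2} are in place: $(X,d)$ is complete by assumption, $T$ is a polynomial contraction (in the sense of Definition \ref{def2.1}) by assumption, condition (i) of Theorem \ref{T2.2} (continuity of $T$) was just established via Proposition \ref{PR2.3}, and condition (ii) of Theorem \ref{T2.2} is precisely hypothesis (iii) of the corollary. Applying Theorem \ref{T2.2} then delivers both conclusions at once: existence and uniqueness of a fixed point $z^*\in X$, and the convergence to $z^*$ of every Picard sequence $\{z_n\}$ defined by $z_{n+1}=Tz_n$ for any starting point $z_0\in X$.

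There is no real obstacle to overcome here; the only thing worth double-checking is that the labelling of hypotheses matches up correctly (the lower-bound condition on $a_j$ appears as (ii) in Theorem \ref{T2.2} but as (iii) in Corollary \ref{CR2.4}, while the upper-bound condition and the vanishing of $a_0$ are the extra ingredients needed only to run Proposition \ref{PR2.3} and are not required separately by Theorem \ref{T2.2}). Thus the proof reduces to a one-line application: by Proposition \ref{PR2.3}, $T$ is continuous, and then by Theorem \ref{T2.2} the desired conclusions follow.
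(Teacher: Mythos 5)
Your proposal is correct and matches the paper exactly: the paper also deduces Corollary \ref{CR2.4} by first invoking Proposition \ref{PR2.3} to establish continuity of $T$ and then applying Theorem \ref{T2.2}. Your additional check that the hypothesis labels align is accurate but adds nothing beyond the paper's one-line deduction.
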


The following result is an immediate consequence of Corollary \ref{CR2.4}.

\begin{corollary}\label{CR2.5}
Let $(X,d)$ be a complete metric space and $T: X\to X$ be a given mapping. Assume that there exist $\lambda\in (0,1)$, a natural number $k\geq 1$ and a finite sequence $\{a_i\}_{i=1}^k\subset (0,\infty)$ such that 
\begin{equation}\label{CT-CR2.5}
\sum_{i=1}^ka_id^i(Tx,Ty)\leq \lambda \sum_{i=1}^k a_id^i(x,y)
\end{equation}
for every $x,y\in X$. Then  $T$ admits a unique fixed point $z^*\in X$. Moreover, for every $z_0\in X$, the Picard sequence  $\{z_n\}\subset X$ defined by $z_{n+1}=Tz_n$ for all $n\geq 0$, converges to $z^*$.   	
\end{corollary}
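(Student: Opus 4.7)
The plan is simply to observe that Corollary \ref{CR2.5} is the specialization of Corollary \ref{CR2.4} obtained when the coefficient mappings $a_i:X\times X\to[0,\infty)$ are taken to be constant. So the strategy is to cast the given hypotheses in the framework of Definition \ref{def2.1} and then verify the three conditions of Corollary \ref{CR2.4}.

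Concretely, I would first define the family of coefficient mappings by setting $a_0(x,y)\equiv 0$ and, for each $i\in\{1,\dots,k\}$, $a_i(x,y)=a_i$ for all $x,y\in X$ (a slight abuse of notation, identifying the constant $a_i$ with the corresponding constant mapping on $X\times X$). Under this identification, the hypothesis \eqref{CT-CR2.5} reads exactly as \eqref{SPC}, so $T$ is a polynomial contraction in the sense of Definition \ref{def2.1}.

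Next, I would verify the three conditions of Corollary \ref{CR2.4}. Condition (i), namely $a_0\equiv 0$, holds by the very choice of $a_0$. Condition (ii) is satisfied with the explicit bound $B_i:=a_i$ for each $i\in\{1,\dots,k\}$, since $a_i(x,y)=a_i\leq B_i$. Condition (iii) is satisfied by fixing any $j\in\{1,\dots,k\}$ (for instance $j=1$) and setting $A_j:=a_j>0$, because $a_j(x,y)=a_j\geq A_j$ for every $x,y\in X$; the positivity $a_j>0$ is guaranteed by the assumption $\{a_i\}_{i=1}^{k}\subset(0,\infty)$.

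With these verifications in place, Corollary \ref{CR2.4} applies and yields both the existence and uniqueness of a fixed point $z^*\in X$ as well as the convergence of every Picard sequence to $z^*$, which is precisely the conclusion of Corollary \ref{CR2.5}. There is no real obstacle here; the only subtlety worth mentioning is the mild notational abuse of reusing the symbol $a_i$ both for the constants in \eqref{CT-CR2.5} and for the constant mappings required by Definition \ref{def2.1}, which should be flagged briefly to keep the argument unambiguous.
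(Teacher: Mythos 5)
Your proposal is correct and matches the paper's approach exactly: the paper presents Corollary \ref{CR2.5} as an immediate consequence of Corollary \ref{CR2.4}, obtained precisely by taking $a_0\equiv 0$ and constant mappings $a_i(x,y)=a_i$, with the bounds $B_i=A_i=a_i$ verifying conditions (i)--(iii) just as you describe. Your explicit verification (and the remark on the notational reuse of $a_i$) is a faithful, slightly more detailed write-up of the same argument.
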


\begin{remark}
Observe that Corollary \ref{CR2.5} recovers Banach's fixed point theorem. 	Indeed, taking $k=1$ and $a_1=1$, \eqref{CT-CR2.5} reduces to 
$$
d(Tx,Ty)\leq \lambda d(x,y),\quad x,y\in X. 
$$
\end{remark}

We provide below an example to illustrate Theorem \ref{T2.2}.

\begin{example}\label{ex2.7}
Let $X=\{x_1,x_2,x_3,x_4\}$ and $T: X\to X$ be the mapping defined by 
$$
Tx_1=x_1,\,\, Tx_2=x_3,\,\, Tx_3=x_4,\,\, Tx_4=x_1. 
$$ 
Let $d$ be the discrete metric on $X$, i.e., 
$$
d(x_i,x_j)=\left\{\begin{array}{llll}
1 &\mbox{if}& i\neq j,\\[4pt]
0  &\mbox{if}& i=j. 	
\end{array}
\right.
$$
Consider the mapping $a_0: X\times X\to [0,\infty)$ defined by 
\begin{eqnarray*}
&&a_0(x_i,x_j)=a_0(x_j,x_i),\\
&&a_0(x_i,x_i)=0,\\
&&a_0(x_1,x_2)=a_0(x_2,x_3)=3,\\
&&a_0(x_1,x_3)=a_0(x_3,x_4)=2,\\
&& a_0(x_1,x_4)=1,\\
&&a_0(x_2,x_4)=6.  
\end{eqnarray*}
We claim that 
\begin{equation}\label{claim1}
a_0(Tx,Ty)+d(Tx,Ty)\leq \frac{3}{4} \left(a_0(x,y)+d(x,y)\right)	
\end{equation}
for every $x,y\in X$, that is, $T$ is a polynomial contraction in the sense of Definition \ref{def2.1} with $k=1$, $a_1\equiv 1$ and $\lambda=\frac{3}{4}$. If $x=y$ or $(x,y)=(x_1,x_4)$, then \eqref{claim1} is obvious. Then, by symmetry, we have just to show that \eqref{claim1} holds for all $x_i,x_j\in X$ with $1\leq i<j\leq 4$ and $(i,j)\neq (1,4)$. Table \ref{Tb} provides the different values of $a_0(Tx_i,Tx_j)+d(Tx_i,Tx_j)$ and  $a_0(x_i,x_j)+d(x_i,x_j)$ for all  $1\leq i<j\leq 4$, which confirm \eqref{claim1}. 
\begin{table}[htt!]
\begin{center}
\begin{tabular}{|c | c | c |}
  \hline			
  $(i,j)$ & $a_0(Tx_i,Tx_j)+d(Tx_i,Tx_j)$  & $a_0(x_i,x_j)+d(x_i,x_j)$ \\
  \hline
  $(1,2)$ & 3 & 4 \\
  \hline
  $(1,3)$ & 2 & 3 \\
  \hline 
 $(2,3)$ & 3 & 4 \\
  \hline 
  $(2,4)$ & 3 & 7 \\
  \hline 
  $(3,4)$ & 2 & 3 \\
  \hline 
\end{tabular}
\caption{The values of $a_0(Tx_i,Tx_j)+d(Tx_i,Tx_j)$ \& $a_0(x_i,x_j)+d(x_i,x_j)$}\label{Tb}
\end{center}
\end{table}
Then, all the conditions of Theorem \ref{T2.2} are satisfied ((ii) is satisfied with $A_1=1$). On the other hand, $T$ admits a unique fixed point $z^*=x_1$, which confirms our obtained result. 

Remark that Banach's fixed point theorem is not applicable in this example.  Indeed, we have
$$
\frac{d(Tx_1,Tx_2)}{d(x_1,x_2)}=\frac{d(x_1,x_3)}{d(x_1,x_2)}=1. 
$$

We also notice that, if we consider the mapping 
$$
\mathcal{D}(x,y)=d(x,y)+a_0(x,y),\quad x,y\in X, 
$$
then \eqref{claim1} reduces to 
$$
\mathcal{D}(Tx,Ty)\leq \frac{3}{4} \mathcal{D}(x,y),\quad x,y\in X. 
$$
However, $\mathcal{D}$ is not a metric on $X$. This can be easily seen observing that (see Table \ref{Tb}) 
$$
\mathcal{D}(x_2,x_4)=7>6=\mathcal{D}(x_2,x_1)+\mathcal{D}(x_1,x_4). 
$$
\end{example}

We now weaken the continuity condition imposed on $T$ in Theorem \ref{T2.2}. 

\begin{definition}\label{ddef-2.8}
Let $(X,d)$ be a metric space. A mapping  $T: X\to X$ is called Picard-continuous, if for all $z,w\in X$, we have 
$$
\lim_{n\to \infty}d(T^nz,w)=0\implies \lim_{n\to \infty} d(T(T^nz),Tw)=0, 
$$ 
where $T^0z=z$ and $T^{n+1}z=T(T^nz)$ for all $n\geq 0$. 
\end{definition}

Remark that, if $T: X\to X$ is continuous, then $T$ is Picard-continuous. However, the converse is not true. The following example shows this fact.

\begin{example}\label{ex2.9}
Let $X=[a,b]$, where $a,b\in \mathbb{R}$ and $a<b$. Consider the mapping  $T: X\to X$  defined by 
$$
Tx=\left\{\begin{array}{llll}
a &\mbox{if}& a\leq  x< b,\\[4pt]
\frac{a+b}{2} 	 &\mbox{if}& x=b.
\end{array}
\right.
$$	
Let $d$ be the standard metric on $X$, that is, $d(x,y)=|x-y|$ for all $x,y\in X$.  Clearly, the mapping $T$ is not continuous at $b$. However, $T$ is Picard-continuous in the sense of Definition \ref{ddef-2.8}. Indeed,  observe  that for all $z\in X$, we have 
$$
T^nz=a\quad \mbox{ for all }n\geq 2. 
$$
So, if for some $z,w\in X$, we have 
$$
\lim_{n\to \infty}d(T^nz,w)=0,
$$
then $w=a$ and 
$$
\lim_{n\to \infty}d(T(T^nz),w)=\lim_{n\to\infty} d(Ta,w)=d(Ta,w)=0, 
$$
which shows that $T$ is Picard-continuous.
\end{example}

\begin{theorem}\label{T2.10}
Let $(X,d)$ be a complete metric space and $T: X\to X$ be a polynomial contraction. Assume that the following conditions hold:
\begin{itemize}
\item[{\rm{(i)}}]$T$ is Picard-continuous;
\item[{\rm{(ii)}}] There exist $j\in\{1,\cdots,k\}$ and 	$A_j>0$ such that 
$$
a_j(x,y)\geq A_j,\quad x,y\in X.
$$
\end{itemize}
Then $T$ admits a unique fixed point $z^*\in X$. Moreover, for every $z_0\in X$, the Picard sequence  $\{z_n\}\subset X$ defined by $z_{n+1}=Tz_n$ for all $n\geq 0$, converges to $z^*$. 
\end{theorem}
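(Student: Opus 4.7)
The proof plan is to recycle essentially the entire argument of Theorem~\ref{T2.2}, observing that continuity of $T$ was used at exactly one step, and that Picard-continuity is precisely tailored to replace it there.

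First, I would fix $z_0 \in X$, form the Picard iterates $z_{n+1} = Tz_n$, and reproduce verbatim the chain of inequalities \eqref{S1}--\eqref{S3}. This portion of the argument only relies on the polynomial contraction condition \eqref{SPC} and on hypothesis (ii) (to extract an $A_j d^j(z_n, z_{n+1})$ term from the sum), neither of which involves continuity. The triangle inequality estimate on $d^j(z_n, z_{n+m})$ then yields, exactly as before, that $\{z_n\}$ is a Cauchy sequence; completeness of $(X,d)$ provides $z^* \in X$ with $d(z_n, z^*) \to 0$.

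The single place where continuity of $T$ was invoked in Theorem~\ref{T2.2} was to conclude $d(Tz_n, Tz^*) \to 0$ from $d(z_n, z^*) \to 0$. Here I would apply Definition~\ref{ddef-2.8} instead: since $z_n = T^n z_0$ and $d(T^n z_0, z^*) \to 0$, Picard-continuity applied with $z = z_0$, $w = z^*$ gives
\[
\lim_{n \to \infty} d\bigl(T(T^n z_0), Tz^*\bigr) = 0,
\]
which is exactly $\lim_{n\to\infty} d(z_{n+1}, Tz^*) = 0$. Combined with $z_{n+1} \to z^*$, uniqueness of limits in the metric space $(X,d)$ forces $Tz^* = z^*$.

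For uniqueness, I would simply repeat the final paragraph of the proof of Theorem~\ref{T2.2}: if $z^{**}$ were another fixed point with $d(z^*, z^{**}) > 0$, applying \eqref{SPC} with $(x,y) = (z^*, z^{**})$ and using the fixed point property on both sides yields
\[
\sum_{i=0}^k a_i(z^*, z^{**}) d^i(z^*, z^{**}) \leq \lambda \sum_{i=0}^k a_i(z^*, z^{**}) d^i(z^*, z^{**}),
\]
while hypothesis (ii) guarantees the right-hand sum is strictly positive (via $A_j d^j(z^*, z^{**}) > 0$), contradicting $\lambda \in (0,1)$. No obstacle is anticipated: the whole content of the theorem is the recognition that the continuity step in Theorem~\ref{T2.2} can be isolated and localized so that the weaker notion in Definition~\ref{ddef-2.8} suffices; the rest is a direct transcription.
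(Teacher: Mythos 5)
Your proposal is correct and follows essentially the same route as the paper: the paper likewise reuses the Cauchy-sequence argument from Theorem~\ref{T2.2} verbatim, invokes Picard-continuity with $z=z_0$, $w=z^*$ at exactly the one step where continuity was used, and concludes uniqueness by the same contradiction argument. No gaps.
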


\begin{proof}
We first prove that the set of fixed points of $T$ is nonempty. Let $z_0\in X$ be fixed and  $\{z_n\}\subset X$ be the Picard sequence defined by  
$$
z_{n+1}=Tz_n,\quad  n\geq 0,
$$	
that is,
$$
z_n=T^nz_0,\quad n\geq 0. 
$$
From the proof of Theorem \ref{T2.2}, we know that $\{z_n\}$ is a Cauchy sequence, which implies by the completeness of $(X,d)$ that there exists $z^*\in X$ such that 
$$
\lim_{n\to \infty}d(T^nz_0,z^*)=0. 
$$
Then, by the Picard continuity of $T$, it holds that 
$$
\lim_{n\to \infty}d(T^{n+1}z_0,Tz^*)=\lim_{n\to \infty}d(T(T^nz_0),Tz^*)=0, 
$$
which implies by the uniqueness of the limit that $z^*$ is a fixed point of $T$. The rest of the proof is similar to that of Theorem \ref{T2.2}.
\end{proof}

We now provide an example to illustrate Theorem \ref{T2.10}. 

\begin{example}\label{ex2.10}
Let $X=[0,1]$ and $T: X\to X$ be the mapping defined by 
$$
Tx=\left\{\begin{array}{llll}
\frac{1}{4} &\mbox{if}& 0\leq x<1,\\[4pt]
0 &\mbox{if}& x=1.	
\end{array}
\right.
$$	
Let $d$ be the standard metric on $X$, i.e., 
$$
d(x,y)=|x-y|,\quad x,y\in X. 
$$
Remark that $T$ is not a continuous mapping, but it is Picard-continuous (see Example \ref{ex2.9}).   

Consider now the mapping $a_0: X\times X\to [0,\infty)$ defined by 
$$
a_0(x,y)=\frac{5}{6}\left(x\left|x-\frac{1}{4}\right|+y\left|y-\frac{1}{4}\right|\right),\quad x,y\in X. 
$$
We claim that 
\begin{equation}\label{claim2}
a_0(Tx,Ty)+d(Tx,Ty)\leq \frac{1}{2} \left(a_0(x,y)+d(x,y)\right)	
\end{equation}
for all $x,y\in X$, that is, $T$ is a polynomial contraction in the sense of Definition \ref{def2.1} with $k=1$, $a_1\equiv 1$ and $\lambda=\frac{1}{2}$.  We discuss three possible cases (due to the symmetry of $a_0$). \\
Case 1: $x,y\in [0,1)$.  In this case, we have
$$
\begin{aligned}
a_0(Tx,Ty)+d(Tx,Ty)&=a_0\left(\frac{1}{4},\frac{1}{4}\right)+d\left(\frac{1}{4},\frac{1}{4}\right)\\
&=0,	
\end{aligned}
$$
which yields \eqref{claim2}. \\
Case 2: $x \in [0,1)$ and $y=1$.  In this case, we have
$$
\begin{aligned}
a_0(Tx,Ty)+d(Tx,Ty)&=a_0\left(\frac{1}{4},0\right)+d\left(\frac{1}{4},0\right)\\
&=\frac{1}{4}	
\end{aligned}
$$
and
$$
\begin{aligned}
\frac{a_0(x,y)+d(x,y)}{2}&=\frac{a_0\left(x,1\right)+d\left(x,1\right)}{2}\\
&=\frac{5}{12}x\left|x-\frac{1}{4}\right|+\frac{5}{16}+\frac{|x-1|}{2}\\
&\geq \frac{1}{4},
\end{aligned}
$$
which yields \eqref{claim2}.\\
Case 3: $x=y=1$. In this case, we have
$$
\begin{aligned}
a_0(Tx,Ty)+d(Tx,Ty)&=a_0(0,0)+d\left(0,0\right)\\
&=0,	
\end{aligned}
$$
which yields \eqref{claim2}.

Therefore, \eqref{claim2} holds. Consequently, Theorem \ref{T2.10} applies. On the other hand, $z^*=\frac{1}{4}$ is the unique fixed point of $T$, which confirms the obtained result given by Theorem \ref{T2.10}. 

Notice that in this example, Theorem \ref{T2.2} is inapplicable since $T$ is not  continuous.  
\end{example}

\section{The class of almost polynomial contractions}\label{sec3}

Motivated by Berinde \cite{Berinde1},  we introduce below the class of almost polynomial contractions. 

\begin{definition}\label{def3.1}
Let $(X,d)$ be a metric space and $T: X\to X$	 be a given mapping. We say that $T$ is an almost polynomial contraction,  if there exists $\lambda\in (0,1)$, a natural number $k\geq 1$, a finite sequence $\{L_i\}_{i=0}^k\subset (0,\infty)$  and a family of mappings  $a_i: X\times X\to [0,\infty)$, $i=0,\cdots,k$, such that 
\begin{equation}\label{SPC-B}
\sum_{i=0}^ka_i(Tx,Ty)d^i(Tx,Ty)\leq \lambda \sum_{i=0}^k a_i(x,y)\left[d^i(x,y)+L_id^i(y,Tx)\right]	
\end{equation}
for every $x,y\in X$.  
\end{definition}

We recall below the concept of weakly Picard operators, which was introduced by Rus (see e.g. \cite{Rus1,Rus2,Rus3}).

\begin{definition}
Let $(X,d)$ be a metric space and $T: X\to X$	 be a given mapping. We say that $T$ is a weakly Picard operator, if 
\begin{itemize}
\item[(i)] The set of fixed points of $T$ is nonempty;
\item[(ii)] For all $z_0\in X$, the Picard sequence $\{T^nz_0\}$ is convergent and its limit belongs to the set of fixed points of $T$. 
\end{itemize}
\end{definition}

Our main result in this section is the following fixed point theorem.

\begin{theorem}\label{T3.3}
Let $(X,d)$ be a complete metric space and $T: X\to X$ be an almost polynomial contraction. Assume that the following conditions hold:
\begin{itemize}
\item[{\rm{(i)}}]$T$ is Picard-continuous;
\item[{\rm{(ii)}}] There exist $j\in\{1,\cdots,k\}$ and 	$A_j>0$ such that 
$$
a_j(x,y)\geq A_j,\quad x,y\in X.
$$
\end{itemize}
Then $T$ is a weakly Picard operator. 
\end{theorem}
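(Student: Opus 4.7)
The plan is to mimic the proof of Theorem \ref{T2.10} almost verbatim, exploiting the key observation that the extra ``almost'' term in \eqref{SPC-B} collapses along any Picard orbit. Fix $z_0\in X$ and form the Picard iterates $z_{n+1}=Tz_n$. Apply \eqref{SPC-B} with $(x,y)=(z_n,z_{n+1})$; since $Tx=Tz_n=z_{n+1}=y$, we have $d(y,Tx)=d(z_{n+1},z_{n+1})=0$, so every $L_i d^i(y,Tx)$ term vanishes. Thus
$$
\sum_{i=0}^k a_i(z_{n+1},z_{n+2})d^i(z_{n+1},z_{n+2})\leq \lambda \sum_{i=0}^k a_i(z_n,z_{n+1})d^i(z_n,z_{n+1}),
$$
which is exactly the recurrence driving the proof of Theorem \ref{T2.2}. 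Iterating gives the analogue of \eqref{S2}, and then using (ii) to isolate the $j$-th term produces the analogue of \eqref{S3}, namely $d^j(z_n,z_{n+1})\leq \lambda^n\sigma_{j,0}$ for a constant $\sigma_{j,0}$ defined as in \eqref{sigma0}.

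Next, the triangle inequality in the form $d^j(z_n,z_{n+m})\leq \sum_{\ell=0}^{m-1} d^j(z_{n+\ell},z_{n+\ell+1})$ together with the geometric bound $\sum_{\ell\geq 0}\lambda^{n+\ell}=\lambda^n/(1-\lambda)$ yields
$$
d(z_n,z_{n+m})\leq \left(\frac{\sigma_{j,0}}{1-\lambda}\right)^{1/j}(\lambda^{1/j})^n,
$$
so $\{z_n\}$ is Cauchy. Completeness of $(X,d)$ provides $z^*\in X$ with $z_n\to z^*$, i.e., $T^n z_0\to z^*$. Picard-continuity of $T$ then gives $T(T^n z_0)\to Tz^*$; but $T(T^nz_0)=z_{n+1}\to z^*$, so by uniqueness of limits $Tz^*=z^*$.

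Since $z_0$ was arbitrary and the limit of the Picard sequence was shown to be a fixed point, $T$ is a weakly Picard operator. The main (indeed only) conceptual step is the cancellation $d(z_{n+1},Tz_n)=0$ that trivializes the $L_i$-terms on a Picard orbit, reducing the almost-polynomial case to the polynomial case of Theorem \ref{T2.10}; no uniqueness assertion is expected, consistent with the remark following Theorem \ref{T1.2} that almost contractions may have several fixed points.
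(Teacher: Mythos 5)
Your proposal is correct and takes essentially the same route as the paper's own proof: the cancellation $d(z_{n+1},Tz_n)=0$ along the Picard orbit eliminates the $L_i$-terms in \eqref{SPC-B}, reducing everything to the recurrence of Theorem \ref{T2.2}, after which the Cauchy estimate via condition (ii), completeness, and Picard-continuity give a fixed point exactly as the paper does. You even mirror the paper's implicit convention that the $i=0$ term $L_0\,d^0(z_{n+1},Tz_n)$ vanishes when $d(z_{n+1},Tz_n)=0$, so there is no substantive divergence from the published argument.
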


\begin{proof}
Let $z_0\in X$ be fixed and  $\{z_n\}\subset X$ be the Picard sequence defined by  
$$
z_{n+1}=Tz_n,\quad  n\geq 0.
$$
Making use of \eqref{SPC-B} with $(x,y)=(z_0,z_1)$, we obtain
$$
\sum_{i=0}^ka_i(Tz_0,Tz_1)d^i(Tz_0,Tz_1)\leq \lambda \sum_{i=0}^k a_i(z_0,z_1)\left[d^i(z_0,z_1)+L_id^i(z_1,Tz_0)\right],			
$$
that is,
\begin{equation}\label{SSS1}
\sum_{i=0}^ka_i(z_1,z_2)d^i(z_1,z_2)\leq \lambda \sum_{i=0}^k a_i(z_0,z_1)d^i(z_0,z_1).
\end{equation}
Again, making use of \eqref{SPC-B} with $(x,y)=(z_1,z_2)$, we obtain
$$
\sum_{i=0}^ka_i(Tz_1,Tz_2)d^i(Tz_1,Tz_2)\leq \lambda \sum_{i=0}^k a_i(z_1,z_2)\left[d^i(z_1,z_2)+L_id^i(z_2,Tz_1)\right],			
$$
that is,
$$
\sum_{i=0}^ka_i(z_2,z_3)d^i(z_2,z_3)\leq \lambda \sum_{i=0}^k a_i(z_1,z_2)d^i(z_1,z_2),
$$
which gives us thanks to \eqref{SSS1} that 
$$
\sum_{i=0}^ka_i(z_2,z_3)d^i(z_2,z_3)\leq \lambda^2 \sum_{i=0}^k a_i(z_0,z_1)d^i(z_0,z_1).
$$
Continuing this process, we get by induction that 
$$
\sum_{i=0}^ka_i(z_n,z_{n+1})d^i(z_n,z_{n+1})\leq \lambda^n \sum_{i=0}^k a_i(z_0,z_1)d^i(z_0,z_1),\quad n\geq 0,
$$
which implies from (ii) that 
$$
d^j(z_n,z_{n+1})\leq  \lambda^n \sigma_{j,0},\quad n\geq 0,	
$$
where $\sigma_{j,0}$ is given by \eqref{sigma0}. Next, proceeding as in the proof of Theorem \ref{T2.2}, we obtain that $\{z_n\}$ is a Cauchy sequence, which implies by the completeness of $(X,d)$ the existence of $z^*\in X$ such that 
$$
\lim_{n\to \infty}d(z_n,z^*)=0. 
$$
Finally, taking into consideration that $T$ is Picard-continuous, we get 
$$
\lim_{n\to \infty}d(z_{n+1},Tz^*)=0,
$$
which implies by the uniqueness of the limit that $z^*=Tz^*$. The proof of Theorem \ref{T3.3} is then completed. 
\end{proof}

We now investigate some special cases of Theorem \ref{T3.3}. 

\begin{proposition}\label{PP}
Let $(X,d)$ be a metric space and $T: X\to X$ be a given mapping. Assume that there exist $\lambda\in (0,1)$, a natural number $k\geq 1$ and two finite sequence $\{a_i\}_{i=1}^k, \{L_i\}_{i=1}^k\subset (0,\infty)$ such that 
\begin{equation}\label{CT-CR3.3}
\sum_{i=1}^ka_id^i(Tx,Ty)\leq \lambda \sum_{i=1}^k a_i\left[d^i(x,y)+L_id^i(y,Tx)\right]
\end{equation}
for every $x,y\in X$. Then $T$ is Picard-continuous  	
\end{proposition}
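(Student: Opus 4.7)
The plan is to verify the definition of Picard-continuity directly from \eqref{CT-CR3.3}. Fix $z,w\in X$ with $\lim_{n\to\infty} d(T^nz,w)=0$, and write $u_n=T^nz$, so that $Tu_n=u_{n+1}$. The goal is to show that $d(u_{n+1},Tw)\to 0$ as $n\to\infty$, since then $d(T(T^nz),Tw)\to 0$ and the definition of Picard-continuity is satisfied.

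The first step I would take is a simple but crucial reindexing observation: the hypothesis $d(u_n,w)\to 0$ automatically yields $d(u_{n+1},w)=d(T^{n+1}z,w)\to 0$ as well, by shifting the index of a convergent sequence. With this in hand, I would substitute $x=u_n$ and $y=w$ into \eqref{CT-CR3.3} to get
$$
\sum_{i=1}^k a_i\, d^i(Tu_n,Tw)\leq \lambda\sum_{i=1}^k a_i\left[d^i(u_n,w)+L_i\, d^i(w,Tu_n)\right].
$$
Because $Tu_n=u_{n+1}$, the right-hand side is a finite sum of terms of the form $d^i(u_n,w)$ and $d^i(w,u_{n+1})$, each of which tends to $0$ by the preceding observation; hence the whole right-hand side tends to $0$. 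On the left, since all coefficients $a_i$ are strictly positive, I would pick (for definiteness) $j=1$ and use the termwise bound $a_j\, d^j(Tu_n,Tw)\leq \sum_{i=1}^k a_i\, d^i(Tu_n,Tw)$ to conclude $d^j(Tu_n,Tw)\to 0$, and therefore $d(Tu_n,Tw)\to 0$, which is precisely what is needed.

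The only real obstacle is conceptual rather than technical. The inequality \eqref{CT-CR3.3} contains the term $d(y,Tx)=d(w,Tu_n)$ on its right-hand side, and for arbitrary sequences $u_n\to w$ one cannot control this quantity without already knowing that $T$ is continuous. The reason the argument succeeds for \emph{Picard} sequences $u_n=T^nz$ is that $\{Tu_n\}=\{u_{n+1}\}$ is merely a shift of $\{u_n\}$ and so converges to the same limit $w$ at no extra cost. Once this is recognized, the remainder is a routine extraction using the positive lower bound on $a_j$.
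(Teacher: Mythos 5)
Your proof is correct and follows essentially the same route as the paper's: substitute $(x,y)=(T^nz,w)$ into \eqref{CT-CR3.3}, observe that both $d(T^nz,w)$ and its shifted version $d(w,T^{n+1}z)$ tend to zero, and extract $d(T^{n+1}z,Tw)\to 0$ from the left-hand side via the strictly positive coefficient $a_1$. Your explicit remark that the shift $\{Tu_n\}=\{u_{n+1}\}$ is what makes the term $d(y,Tx)$ harmless for Picard sequences is exactly the (implicit) mechanism in the paper's proof, just stated more transparently.
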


\begin{proof}
Let $z,u\in X$ be such that 
\begin{equation}\label{OK1}
\lim_{q\to \infty}  d(T^{q}z,u)=0.	
\end{equation}
Making use of \eqref{CT-CR3.3} with $(x,y)=(T^{q}z,u)$, we obtain
$$
\sum_{i=1}^ka_id^i(T(T^{q}z),Tu)\leq \lambda \sum_{i=1}^k a_i\left[d^i(T^{q}z,u)+L_id^i(u,T(T^{q}z))\right],
$$
that is,
$$
\sum_{i=1}^ka_id^i(T^{q+1}z,Tu)\leq \lambda \sum_{i=1}^k a_i\left[d^i(T^{q}z,u)+L_id^i(u,T^{q+1}z)\right], 
$$
which implies that 
$$
d(T(T^{q}z),Tu)\leq \frac{\lambda}{a_1} \sum_{i=1}^k a_i\left[d^i(T^{q}z,u)+L_id^i(u,T^{q+1}z)\right].
$$
Then, passing to the limit as $q\to \infty$ in the above inequality and making use of \eqref{OK1}, we obtain
$$
\lim_{q\to \infty}  d(T(T^{q}z),Tu)=0,	
$$
which proves that $T$ is Picard-continuous. 	
\end{proof}

From Theorem \ref{T3.3} and Proposition \ref{PP}, we deduce the following result. 

\begin{corollary}\label{CR3.4}
Let $(X,d)$ be a complete metric space and $T: X\to X$ be a given mapping. Assume that there exist $\lambda\in (0,1)$, a natural number $k\geq 1$ and two finite sequence $\{a_i\}_{i=1}^k, \{L_i\}_{i=1}^k\subset (0,\infty)$ such that \eqref{CT-CR3.3} holds for every $x,y\in X$. Then $T$ is a weakly Picard operator. 
\end{corollary}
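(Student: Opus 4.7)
The plan is to deduce Corollary \ref{CR3.4} as a direct consequence of Theorem \ref{T3.3} and Proposition \ref{PP}, by verifying that the hypotheses of Theorem \ref{T3.3} are all met under the assumption \eqref{CT-CR3.3}. Concretely, I would first recast the mapping $T$ as an almost polynomial contraction in the sense of Definition \ref{def3.1} by taking the coefficient functions to be the constants $a_i(x,y)\equiv a_i$ for $i=1,\dots,k$, setting $a_0\equiv 0$ (with any choice of $L_0>0$), and keeping the given constants $L_i>0$ for $i=1,\dots,k$. Since the $i=0$ term then contributes nothing to either side, inequality \eqref{CT-CR3.3} coincides exactly with \eqref{SPC-B}, so $T$ is an almost polynomial contraction.

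Next I would verify condition (ii) of Theorem \ref{T3.3}: because $a_1>0$ is a (strictly positive) constant, one may simply take $j=1$ and $A_j=a_1$ to obtain the required uniform lower bound $a_j(x,y)\geq A_j$ for all $x,y\in X$.

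For condition (i) of Theorem \ref{T3.3}, namely the Picard-continuity of $T$, I would directly invoke Proposition \ref{PP}, whose hypothesis is precisely \eqref{CT-CR3.3}. This gives $T$ Picard-continuous with no additional work.

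With all hypotheses of Theorem \ref{T3.3} verified, the conclusion that $T$ is a weakly Picard operator is immediate. There is no real obstacle in this proof, since it is a book-keeping reduction; the only point that requires a moment of care is confirming that allowing $a_0\equiv 0$ in Definition \ref{def3.1} is consistent with the nonnegativity requirement on the $a_i$ (it is), so that the match between \eqref{CT-CR3.3} and \eqref{SPC-B} is rigorous.
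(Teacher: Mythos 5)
Your proposal is correct and follows essentially the same route as the paper: the authors also observe that \eqref{CT-CR3.3} is the special case of \eqref{SPC-B} with $a_0\equiv 0$ and constant $a_i$, and then apply Proposition \ref{PP} together with Theorem \ref{T3.3}. Your write-up is in fact slightly more careful than the paper's, since you explicitly verify condition (ii) of Theorem \ref{T3.3} via $j=1$, $A_1=a_1$, and note that the requirement $L_0>0$ in Definition \ref{def3.1} is harmlessly satisfied by an arbitrary choice.
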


\begin{proof}
Remark that \eqref{CT-CR3.3} is a special case of \eqref{SPC-B} with $a_0\equiv 0$ and $a_i$ is constant for all $i\in\{1,\cdots,k\}$. Then, by 	Proposition \ref{PP}, Theorem \ref{T3.3} applies. 
\end{proof}

\begin{remark}
Taking $k=1$, $a_1=1$ and $L_1=\frac{\ell}{\lambda}$, where $\ell>0$, \eqref{CT-CR3.3}	reduces to	\eqref{contraction-de-Berinde}. Then, by Corollary \ref{CR3.4}, we recover Berinde's fixed point theorem (Theorem \ref{T1.2}). 
\end{remark}

We now give some examples to illustrate the above obtained results. The following example shows that under the conditions of Theorem \ref{T3.3}, we may have more than one fixed point. 

\begin{example}\label{example-3.6}
Let $X=\left\{x_1,x_2,x_3\right\}$ and $T: X\to X$ be the mapping defined by 
$$
Tx_1=x_1,\,\, Tx_2=x_2,\,\, Tx_3=x_1.
$$
The set $X$ is equipped with the discrete metric 
$$
d(x_i,x_j)=\left\{\begin{array}{llll}
1 &\mbox{if}& i\neq j,\\[4pt]
0  &\mbox{if}& i=j. 	
\end{array}
\right.
$$
We claim that the mapping $T$ satisfies 	\eqref{CT-CR3.3} with $k=2$, $a_1=a_2=1$, $\lambda=\frac{2}{3}$ and $L_1=L_2=\frac{1}{2}$, that is,
\begin{equation}\label{ClaimOK}
d(Tx,Ty)+d^2(Tx,Ty)\leq \frac{2}{3}\left[d(x,y)+\frac{1}{2}d(y,Tx)+d^2(x,y)+\frac{1}{2}d^2(y,Tx)\right]
\end{equation}
for all $x,y\in X$. 
\begin{table}[htt!]
\begin{center}
\begin{tabular}{|c | c | c |}
  \hline			
  $(i,j)$ & $d(Tx_i,Tx_j)+d^2(Tx_i,Tx_j)$  & $d(x_i,x_j)+\frac{1}{2}d(x_j,Tx_i)+d^2(x_i,x_j)+\frac{1}{2}d^2(x_j,Tx_i)$ \\
  \hline
  $(1,2)$ & 2 & 3 \\
  \hline
  $(2,1)$ & 2 & 3 \\
  \hline 
   $(2,3)$ & 2 & 3 \\
  \hline 
  $(3,2)$ & 2 & 3 \\
  \hline 
\end{tabular}
\caption{The values of $d(Tx_i,Tx_j)+d^2(Tx_i,Tx_j)$ \& $d(x_i,x_j)+\frac{1}{2}d(x_j,Tx_i)+d^2(x_i,x_j)+\frac{1}{2}d^2(x_j,Tx_i)$}\label{Tb2}
\end{center}
\end{table} 
If $x=y$ or $(x,y)\in\{(x_1,x_3),(x_3,x_1)\}$,  then \eqref{ClaimOK} is obvious. Table \ref{Tb2} gives the different values of $d(Tx_i,Tx_j)+d^2(Tx_i,Tx_j)$ and $d(x_i,x_j)+\frac{1}{2}d(x_j,Tx_i)+d^2(x_i,x_j)+\frac{1}{2}d^2(x_j,Tx_i)$ for all $i,j\in\{1,2,3\}$ with $i\neq j$ and $(i,j)\not\in\{ (1,3),(3,1)\}$, which confirm \eqref{ClaimOK}.  Then Corollary \ref{CR3.4} applies. On the other hand, the set of fixed points of $T$ is nonempty, which confirms the obtained result provided by Corollary \ref{CR3.4}. Remark that the set of fixed points of $T$ is equal to $\{x_1,x_2\}$. 
\end{example}

An  other example that illustrates    Theorem \ref{T3.3} is   given below. 

\begin{example}\label{ex3.7}
Let $X=[0,1]$ and $T: X\to X$ be the mapping defined by
$$
Tx=\left\{\begin{array}{llll}
\frac{1}{4} &\mbox{if}& 0\leq x<1,\\[4pt]
0 &\mbox{if}& x=1.	
\end{array}
\right.
$$	
Let $d$ be the standard metric on $X$, i.e., 
$$
d(x,y)=|x-y|,\quad x,y\in X. 
$$ 	
We recall that $T$ is Picard-continuous (see Example \ref{ex2.9}).  

Consider  the mapping $a_0: X\times X\to [0,\infty)$ defined by 
$$
a_0(x,y)=\left|4x^2-3x+\frac{1}{2}\right|+\left|4y^2-3y+\frac{1}{2}\right|,\quad x,y\in X. 
$$
We claim that 
\begin{equation}\label{claim-Fex}
a_0(Tx,Ty)+d(Tx,Ty)\leq \frac{1}{2} \left(2a_0(x,y)+d(x,y)+d(y,Tx)\right)
\end{equation}
for all $x,y\in X$, that is, $T$ is an almost polynomial contraction in the sense of Definition \ref{def3.1} with $k=1$, $a_1\equiv 1$, $L_0=L_1=1$ and $\lambda=\frac{1}{2}$. We discuss four possible cases.\\
Case 1: $0\leq x,y<1$. In this case, we have 
$$
\begin{aligned}
a_0(Tx,Ty)+d(Tx,Ty)&=a_0\left(\frac{1}{4},\frac{1}{4}\right)+d\left(\frac{1}{4},\frac{1}{4}\right)\\
&=0.
\end{aligned}
$$  
Then \eqref{claim-Fex} holds. \\
Case 2: $0\leq x<1$, $y=1$. In this case, we have
$$
\begin{aligned}
a_0(Tx,Ty)+d(Tx,Ty)&=a_0\left(\frac{1}{4},0\right)+d\left(\frac{1}{4},0\right)\\
&=\frac{1}{2}+\frac{1}{4}\\
&=\frac{3}{4}\\
&=d(y,Tx)\\
&\leq \frac{1}{2} \left(2a_0(x,y)+d(x,y)+d(y,Tx)\right).
\end{aligned}
$$ 
Then \eqref{claim-Fex} holds. \\
Case 3: $x=1$, $0\leq y<1$. In this case, we have
$$
\begin{aligned}
a_0(Tx,Ty)+d(Tx,Ty)&=a_0\left(0,\frac{1}{4}\right)+d\left(0,\frac{1}{4}\right)\\
&=\frac{1}{2}+\frac{1}{4}\\
&=\frac{3}{4}
\end{aligned}
$$ 
and
$$
\begin{aligned}
a_0(x,y)&=a_0(1,y)\\
&=\frac{3}{2}+\left|4y^2-3y+\frac{1}{2}\right|\\
&\geq \frac{3}{2}.
\end{aligned}
$$
Therefore, it holds that 
$$
\begin{aligned}
a_0(Tx,Ty)+d(Tx,Ty) &=\frac{3}{4}\\
&\leq \frac{3}{2}\\
&\leq \frac{1}{2} \left[2 a_0(x,y)\right]\\
&\leq \frac{1}{2} \left(2a_0(x,y)+d(x,y)+d(y,Tx)\right).	
\end{aligned}
$$
Then \eqref{claim-Fex} holds.\\
Case 4: $x=y=1$. In this case, we have
$$
\begin{aligned}
a_0(Tx,Ty)+d(Tx,Ty)&=a_0(0,0)+d(1,1)\\
&=1\\
&=d(y,Tx)\\
&\leq \frac{1}{2} \left(2a_0(x,y)+d(x,y)+d(y,Tx)\right).
\end{aligned}
$$ 
Then \eqref{claim-Fex} holds. 

Consequently,  \eqref{claim-Fex} is satisfied for all $x,y\in X$. Notice also that condition (ii) of Theorem \ref{T3.3} holds with $j=1$ and $A_1=1$. Then Theorem \ref{T3.3} applies. Observe that $z^*=\frac{1}{4}$ is a fixed point of $T$, which confirms the result given by Theorem \ref{T3.3}. 
\end{example}

\subsection*{Data availability statement}
No new data were created or analyzed in this study.

\subsection*{Conflict of interest}
The authors declare that they have no competing interests.

\subsection*{Funding}
The third author  is supported by Researchers Supporting Project number (RSP2024R4), King Saud University, Riyadh, Saudi Arabia.

\subsection*{Authors' contributions}
All authors contributed equally and significantly in writing this article. All authors read and approved the final manuscript.

\end{document}